\begin{document}
\hyphenation{na tu ral}
\newtheorem{Thm}{Theorem }
\newtheorem{Pro}{Proposition }
\newtheorem{Cor}{Corollary }
\newtheorem{Lem}{Lemma}
\newtheorem{Rem}{Remark}
\newtheorem{Def}{Definition }
\newtheorem{Note}{Note }
\newtheorem{Exam}{Examples}
\newcommand{\espacio} [1]{$\Lambda_{-#1, #1-1}(z)$}
\newcommand{\aprox} [1]{$HF_{-#1, #1-1}(z)$}
\newcommand{\aproxh} [1]{$H_{-#1, #1-1}(z)$}
\title[  Nodal systems and functions]{ About nodal systems  for   Lagrange  interpolation    on the circle }
\author{ E. Berriochoa, A. Cachafeiro, J.M. Garc\'{\i}a Amor}
\address {{\ El\'{\i}as Berriochoa}\\
Departamento de Matem\'atica Aplicada I\\
Facultad de Ciencias\\
Universidad de Vigo\\
Ourense, Spain} \email{esnaola@uvigo.es}
\address {{\ Alicia Cachafeiro}\\
Departamento de Matem\'atica Aplicada I\\
Escuela de Ingenier\'{\i}a  Industrial\\
Universidad de Vigo\\
36310 Vigo, Spain} \email{acachafe@uvigo.es}
\address {{\ Jos\'e M. Garc\'{\i}a Amor}\\
Departamento de Matem\'atica Aplicada I\\
Escuela de Ingenier\'{\i}a  Industrial\\
Universidad de Vigo\\
36310 Vigo, Spain} \email{garciaamor@uvigo.es}
\vspace{1cm}
\input {amssym.def}
\input {amssym}

\thanks{ {\em Mathematics Subject Classification} (2000):   33C45, 42C05,  41A05, 42A15, 65D05.\\
The research was  supported by Ministerio de Educaci\'on y  Ciencia   under grant number MTM2008-00341.}

\subjclass{}

\begin{abstract}
We study the convergence of the Laurent polynomials of Lagrange  interpolation  on the unit circle for  continuous functions satisfying a condition about their modulus of continuity.
The novelty of the result is that now the nodal systems are   more general than those constituted by the $n$ roots of complex  unimodular numbers and the class of functions is wider than the usually studied.
Moreover,  some  consequences for the Lagrange interpolation on $[-1,1]$ and the Lagrange trigonometric interpolation are obtained.
\end{abstract}
\maketitle
\vspace{.6cm}
Key words and phrases: Lagrange interpolation; Laurent polynomials; Convergence; Unit circle; Orthogonal polynomials; Para-orthogonal polynomials;  Szeg\H{o} class;  Szeg\H{o} function.

\section{Introduction} The aim of this paper is to study the Lagrange interpolation problem on the unit circle $\mathbb{T}:=\{ z: \, \vert z \vert =1\}$ for nodal systems more general than those constituted by the $n$ roots of complex unimodular numbers.  This last case has been studied in \cite{DG2}, where there is posed as an open problem its extension to more general nodal systems. Recently a similar problem has been solved in \cite{BeCaM} for the Hermite interpolation problem.  Now we follow the ideas in \cite{BeCaM} to obtain some results for the Lagrange case.
Moreover, in  \cite{DG2}  it is obtained a result about convergence of the interpolants for continuous functions satisfying a condition related with their modulus of continuity. In the present paper our aim is to obtain a similar result for the new nodal systems and with a weaker condition on the modulus of continuity for the functions.\\

The Lagrange interpolation problem  on the real line has been widely studied for a long time and many results about convergence are known, (see \cite{BeTr}, \cite{Rv}, \cite{SzV} and \cite{Tu}). If we only assume the continuity of the function, it is well known that the behavior is rather irregular. Faber has proved that for each nodal system there exists a continuous function such that the sequence of Lagrange interpolation polynomials is not uniformly convergent. Bernstein has also proved the existence of a continuous function such that the sequence of Lagrange interpolation polynomials is unbounded on a prefixed point.  In the case of the nodal systems constituted by the zeros of the Tchebychef polynomials of the first kind, many results are known. Although these last nodal systems are good for interpolation, Gr\"{u}nwald  in \cite{Gru} and Marcinkiewicz in \cite{Mar} have proved the existence of a continuous function such that the sequence of Lagrange interpolation polynomials, corresponding to the Tchebychef nodal system, is divergent. After this result a natural problem was to obtain an analogous result for an arbitrary nodal system. This result was obtained by Erd\"{o}s and V\'ertesi in \cite{EV}, where they prove  that for each nodal system on $[-1,1]$ there exists a continuous function such that the sequence of Lagrange  interpolation polynomials diverges for almost every point in $[-1,1]$.
Thus, to obtain better properties about the convergence of the sequence of Lagrange interpolation polynomials, it is needed to impose some restriction to the function, such as, a condition on its modulus of continuity. In the case of Jacobi abscissas, Szeg\H{o} has obtained important results about convergence by imposing some conditions to the modulus of continuity of the function, (see \cite{Sze}). For example, in the case of the Tchebychef abscissas of first kind, he obtained the uniform convergence  to the function on $[-1,1]$, under the assumption that its modulus of continuity is $o(\vert \log \delta \vert^{-1})$. Szeg\H{o} has also obtained uniform convergence  of the sequence of Lagrange interpolation polynomials for more general nodal systems, under the assumptions that the nodes are the zeros of the orthogonal polynomials with respect to a weight function $w(x)$ such that $w(x)\sqrt{1-x^2} \geq \mu >0 ,\; x \in (-1,1)$ and the modulus of continuity of the functions is $o(\delta^{\frac{1}{2}})$ with
$ \delta \rightarrow 0.$\\

In the present paper we improve some results about convergence of the Lagrange interpolation polynomials in $[-1,1]$, by using the Szeg\H{o} transformation and   the results concerning the unit circle.
The organization of the paper is the following.
In section $2$ we obtain our main result concerning the uniform convergence of the Laurent polynomial of Lagrange interpolation for nodal systems described in terms of some properties and  for  continuous functions with modulus of continuity $o(\delta^p)$ when $\delta\rightarrow 0$ and $p \geq \frac{1}{2}.$
Section $3$ is devoted to obtain some consequences of the preceding results concerning the Lagrange interpolation on $[-1,1]$.
Finally, in the last section,  we obtain some improvements concerning  the  Lagrange trigonometric interpolation.\\

\section{Lagrange  Interpolation in the space of Laurent polynomials}
Let  $\{z_j\}_{j=1}^{n}$ be a set of complex numbers such that $\vert z_j\vert=1$ for all $j=1,\cdots,n$ and  $z_i \neq z_j$ for $i \neq j$. Let     $\{u_j\}_{j=1}^{n}$   be a set of  arbitrary complex numbers,  and let $p(n)$ and $q(n)$ be two nondecreasing  sequences of nonnegative integers such that  $p(n)+q(n)=n-1,\, n \geq 2$ with $\lim_{n \rightarrow \infty} p(n)= \lim_{n \rightarrow \infty} q(n)= \infty$.

We recall that the Lagrange interpolation problem in the space of Laurent polynomials  consists in determining the unique Laurent polynomial $L_{-p(n), q(n)}(z) \in \Lambda_{-p(n), q(n)}= span\{z^k: -p(n) \leq k \leq q(n) \}$ such that  :
\begin{eqnarray}
L_{-p(n), q(n)}(z_j)=u_j,\;\mbox{for}\; j=1,\cdots, n.
\label{Eq:2}\end{eqnarray}

If we denote by $W_n(z)= \displaystyle \prod_{j=1}^n(z-z_j)$ the nodal polynomial, then $L_{-p(n), q(n)}(z)$ can be written as follows
\begin{eqnarray}
L_{-p(n), q(n)}(z)=\sum_{j=1}^n l_{j,n-1}(z)u_j,
\label{Eq:*}\end{eqnarray}
where $l_{j,n-1}(z)$ are the fundamental polynomials of Lagrange interpolation given by
\begin{eqnarray}
l_{j,n-1}(z)=\frac{z_j^{p(n)} W_n(z)}{W_n'(z_j)(z-z_j)z^{p(n)}},\; \mbox{for}\; j=1,\cdots,n,
\label{Eq:**}\end{eqnarray}
and they are characterized by satisfying $l_{j,n-1}(z_k)=\delta_{j,k},\;\forall j,k.$

We are also going to consider the Lagrange interpolation polynomial for a function $F$ defined on $\mathbb{T}$, that we are going to denote by
$L_{-p(n), q(n)}(F;z)$ and which is characterized by fulfilling the conditions  $L_{-p(n), q(n)}(F;z_j)=F(z_j)$ for $j=1, \cdots,n.$

When the nodal system is constituted by the $n$-roots  of a complex number with modulus $1$, and the function $F$ is continuous on $ \mathbb{T}$ and its modulus of continuity satisfies
$ \lambda(F,\delta)= \mathcal{O}(\delta^p), \; p > \frac{1}{2}$, the following result about convergence is known, (see \cite{DG2}).

\begin{Thm} Let $F$ be a continuous function on $\mathbb{T}$, let $p(n)$ and $q(n)$ be two nondecreasing  sequences of nonnegative integers such that  $p(n)+q(n)=n-1$ and  $\displaystyle \lim_{n \rightarrow \infty}\frac{p(n)}{n-1}=r$ with $0<r <1$, and assume that the modulus of continuity of $F$, $ \lambda(F,\delta)= \mathcal{O}(\delta^p)$ for some $p > \frac{1}{2}$,
if $\delta \rightarrow 0$.

Let $L_{-p(n), q(n)}(F;z)$ be the Laurent polynomial of Lagrange interpolation for the function $F$ with nodal system $\{z_j\}_{j=1}^{n}$ the $n$-roots of complex numbers $\tau_n$ with $ \vert \tau_n \vert=1.$

 Then $\displaystyle \lim_{n \rightarrow \infty}L_{-p(n), q(n)}(F;z)=F(z)$ uniformly on $\mathbb{T}$.
\label{TL:1}\end{Thm}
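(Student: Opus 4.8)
The plan is to reduce the interpolation error to a best‑approximation error by exploiting the fact that the interpolation operator is a projection onto $\Lambda_{-p(n),q(n)}$. For each $n$ I would choose a Laurent polynomial $P_n \in \Lambda_{-p(n),q(n)}$ of best (or near‑best) uniform approximation to $F$ on $\mathbb{T}$. Since $\dim \Lambda_{-p(n),q(n)} = n$ and the interpolation problem (\ref{Eq:2}) is unisolvent, one has $L_{-p(n),q(n)}(P_n;z)=P_n(z)$; writing $E_n := \max_{z\in\mathbb{T}}|F(z)-P_n(z)|$, this gives
\[
F(z) - L_{-p(n),q(n)}(F;z) = \big(F(z)-P_n(z)\big) - \sum_{j=1}^n l_{j,n-1}(z)\big(F(z_j)-P_n(z_j)\big).
\]
The first term is bounded by $E_n$ uniformly in $z$. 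By a Jackson‑type estimate for trigonometric approximation, and because $p(n)/(n-1)\to r\in(0,1)$ forces both $p(n)$ and $q(n)$ to grow linearly in $n$, the space $\Lambda_{-p(n),q(n)}$ contains the symmetric trigonometric polynomials of degree at least $cn$, so $E_n \le C\,\lambda(F,1/(cn)) = \mathcal{O}(n^{-p})\to 0$.

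The heart of the argument is the control of the second, interpolatory, term. Here I would use that for a nodal system formed by the $n$ roots of $\tau_n$ with $|\tau_n|=1$ the nodal polynomial is $W_n(z)=z^n-\tau_n$, so that $W_n'(z_j)=n\,z_j^{\,n-1}$ and (\ref{Eq:**}) collapses to the explicit kernel
\[
l_{j,n-1}(z) = \frac{1}{n}\sum_{l=-p(n)}^{q(n)} (z\,\bar z_j)^{l}, \qquad z\in\mathbb{T}.
\]
The equally spaced nodes satisfy the discrete orthogonality $\sum_{j=1}^n z_j^{\,s}=0$ for $0<|s|<n$; expanding $|l_{j,n-1}(z)|^2$ and summing over $j$, all cross terms cancel and one obtains the key identity $\sum_{j=1}^n |l_{j,n-1}(z)|^2 = 1$ for every $z\in\mathbb{T}$. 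Applying the Cauchy--Schwarz inequality to the interpolatory sum then yields
\[
\Big|\sum_{j=1}^n l_{j,n-1}(z)\big(F(z_j)-P_n(z_j)\big)\Big| \le \Big(\sum_{j=1}^n |l_{j,n-1}(z)|^2\Big)^{1/2}\Big(\sum_{j=1}^n |F(z_j)-P_n(z_j)|^2\Big)^{1/2} \le \sqrt{n}\,E_n.
\]

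Combining the two estimates, $\max_{z\in\mathbb{T}}|F(z)-L_{-p(n),q(n)}(F;z)| \le (1+\sqrt{n})\,E_n = \mathcal{O}(n^{1/2-p})$, which tends to $0$ precisely because $p>\tfrac12$; all bounds being independent of $z$, the convergence is uniform on $\mathbb{T}$. The main obstacle, and the step that pins down the exponent $\tfrac12$, is the $\ell^2$ identity $\sum_{j=1}^n |l_{j,n-1}(z)|^2 = 1$: it is the gain over the trivial bound $\sum_{j} |l_{j,n-1}(z)| \sim \log n$ (the Lebesgue constant) that allows the $\sqrt{n}$ loss from Cauchy--Schwarz to be absorbed by $E_n=\mathcal{O}(n^{-p})$. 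This identity is special to the roots‑of‑unity nodes, where the fundamental polynomials are Dirichlet kernels; for the more general nodal systems treated later one can only expect a uniform bound $\sum_{j} |l_{j,n-1}(z)|^2 \le C$, and establishing such a bound is exactly what is needed to extend the result.
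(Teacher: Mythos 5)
Your proposal is correct, and it follows essentially the same scheme the paper uses (for this theorem the paper simply cites Daruis--Gonz\'alez-Vera, but its own proof of the generalization in Theorem \ref{TL:2} is the same argument): subtract the best uniform approximant from $\Lambda_{-p(n),q(n)}$, bound its error by $\lambda(F,\pi/s(n))$ with $s(n)=\min(p(n),q(n))\sim cn$, and absorb the $\sqrt{n}$ growth of $\sum_j\vert l_{j,n-1}(z)\vert$ coming from a Cauchy--Schwarz/$\ell^2$ estimate on the fundamental polynomials. Your exact identity $\sum_j\vert l_{j,n-1}(z)\vert^2=1$ for roots-of-unity nodes is precisely the special case of the paper's hypothesis (ii) in Theorem \ref{TL:2}, as you yourself observe.
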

\begin{proof} See \cite{DG2}.
\end{proof}
The main tools to prove the preceding result are the explicit expression of the Laurent polynomial of Lagrange interpolation and some properties concerning the nodal system.
In \cite{BeCaM}  the Hermite interpolation  problem was studied for general nodal systems satisfying certain properties. Following  similar ideas we prove, in the next theorem,  a result about the convergence of the Lagrange interpolants for a wider class of functions and more general nodal systems.

\begin{Thm} Let $F$ be a continuous function on $\mathbb{T}$, with modulus of continuity  $ \lambda(F,\delta)= o(\delta^{\frac{1}{2}})$, if $\delta \rightarrow 0$. Let $p(n)$ and $q(n)$ be two nondecreasing  sequences of nonnegative integers such that  $p(n)+q(n)=n-1$ and  $\displaystyle \lim_{n \rightarrow \infty}\frac{p(n)}{n-1}=r$ with $0<r <1$.

Let $\{z_j\}_{j=1}^n$ be a set of complex numbers such that $\vert z_j \vert=1$ for all $j=1,\cdots,n$ and $ z_i \neq z_j$ for $i \neq j$ and let
$W_n(z)=\displaystyle \Pi_{j=1}^n (z-z_j)$ be the nodal polynomial.  Assume that  there exist  positive constants $B$ and $L$ such that for every $z \in \mathbb{T}$ and $n$ large enough the following relations hold:
\begin{enumerate}
  \item [(i)]  $B \leq \displaystyle \frac{\vert W_n'(z) \vert}{n},$
  \item [(ii)] $\displaystyle \frac{\vert W_n(z) \vert^2}{n^2} \sum_{j=1}^n \displaystyle \frac{1}{\vert z-z_j \vert^2}\leq L.$
\end{enumerate}
If $L_{-p(n), q(n)}(F;z) \in \Lambda_{-p(n),q(n)}$ is the Laurent polynomial of Lagrange interpolation related to the nodal system and the function $F$,
  then $\displaystyle \lim_{n \rightarrow \infty}L_{-p(n), q(n)}(F;z)=F(z)$ uniformly on $\mathbb{T}$.
\label{TL:2}\end{Thm}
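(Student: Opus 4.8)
The plan is to combine a uniform bound for the Lebesgue function of the nodal system with a Jackson-type estimate for the error of best approximation, exploiting that the interpolation operator reproduces every element of $\Lambda_{-p(n),q(n)}$. Since the interpolation problem \eqref{Eq:2} has a unique solution, for any $P \in \Lambda_{-p(n),q(n)}$ one has $L_{-p(n),q(n)}(P;z)=P(z)$, and hence
\begin{equation*}
F(z)-L_{-p(n),q(n)}(F;z)=\bigl(F(z)-P(z)\bigr)-\sum_{j=1}^n l_{j,n-1}(z)\bigl(F(z_j)-P(z_j)\bigr).
\end{equation*}
Taking absolute values gives $|F(z)-L_{-p(n),q(n)}(F;z)|\le \|F-P\|_\infty\,(1+\mathcal{L}_n(z))$, where $\mathcal{L}_n(z)=\sum_{j=1}^n|l_{j,n-1}(z)|$ denotes the Lebesgue function. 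It therefore suffices to prove that $\mathcal{L}_n(z)$ grows no faster than $\sqrt{n}$ uniformly on $\mathbb{T}$, and that $P$ can be chosen with $\|F-P\|_\infty=o(n^{-1/2})$.

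For the Lebesgue function I would start from \eqref{Eq:**}. Since $|z|=|z_j|=1$ on $\mathbb{T}$, the factors $z_j^{p(n)}$ and $z^{p(n)}$ have modulus one, so $|l_{j,n-1}(z)|=|W_n(z)|/(|W_n'(z_j)|\,|z-z_j|)$. Using hypothesis (i) to replace $|W_n'(z_j)|$ by $Bn$ from below and then the Cauchy--Schwarz inequality,
\begin{equation*}
\mathcal{L}_n(z)\le \frac{|W_n(z)|}{Bn}\sum_{j=1}^n\frac{1}{|z-z_j|}\le\frac{\sqrt{n}}{B}\left(\frac{|W_n(z)|^2}{n^2}\sum_{j=1}^n\frac{1}{|z-z_j|^2}\right)^{1/2}.
\end{equation*}
Hypothesis (ii) bounds the parenthesis by $L$, whence $\mathcal{L}_n(z)\le \sqrt{Ln}/B$ uniformly in $z$. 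This is precisely the sense in which (i) and (ii) control the nodal system: the first keeps the derivative of the nodal polynomial from being too small at the nodes, while the second balances the size of $W_n$ against the clustering of the nodes, and the Cauchy--Schwarz step is what splits the estimate into these two ingredients.

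For the approximation side I would use that $0<r<1$ forces both $p(n)$ and $q(n)$ to grow linearly, so that $m(n):=\min\{p(n),q(n)\}\ge cn$ for some $c>0$ and all large $n$; consequently $\Lambda_{-m(n),m(n)}\subseteq\Lambda_{-p(n),q(n)}$. Choosing $P$ to be the trigonometric polynomial of degree $m(n)$ of best uniform approximation to $F$, Jackson's theorem yields $\|F-P\|_\infty\le C\,\lambda(F,1/m(n))\le C\,\lambda(F,1/(cn))$, and the hypothesis $\lambda(F,\delta)=o(\delta^{1/2})$ then gives $\|F-P\|_\infty=o(n^{-1/2})$. Combining this with the Lebesgue bound,
\begin{equation*}
\bigl|F(z)-L_{-p(n),q(n)}(F;z)\bigr|\le \|F-P\|_\infty\Bigl(1+\tfrac{\sqrt{Ln}}{B}\Bigr),
\end{equation*}
and since $\|F-P\|_\infty=o(n^{-1/2})$ while the right-hand factor is $O(\sqrt{n})$, the product tends to $0$ uniformly on $\mathbb{T}$.

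The delicate point is the exact matching of the two rates. Conditions (i)--(ii) only yield a Lebesgue function of order $\sqrt{n}$, far worse than the logarithmic growth available for classical nodes, so convergence is recovered solely because the weakened smoothness hypothesis $o(\delta^{1/2})$ makes the best-approximation error $o(n^{-1/2})$, exactly offsetting the $\sqrt{n}$ factor (and, consistently, $\mathcal{O}(\delta^{p})$ with $p>\tfrac12$ would suffice as well). I expect the main technical effort to lie in establishing the $\sqrt{n}$ Lebesgue bound rigorously from (i) and (ii) and in verifying that the Jackson estimate applies uniformly on $\mathbb{T}$ with the modulus of continuity $\lambda$; once these are in place, the reproduction identity and the telescoping of the error make the conclusion immediate.
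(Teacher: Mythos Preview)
Your proposal is correct and follows essentially the same route as the paper: both bound the Lebesgue function by $\sqrt{L}\sqrt{n}/B$ via hypothesis~(i), Cauchy--Schwarz, and hypothesis~(ii), then combine this with a Jackson-type estimate $E_{-p(n),q(n)}(F)\le 2\lambda(F,\pi/s(n))$ (where $s(n)=\min\{p(n),q(n)\}$ grows linearly since $0<r<1$) and the reproduction identity to conclude. The only cosmetic difference is that the paper takes $P$ to be the best approximant in the full space $\Lambda_{-p(n),q(n)}$ rather than in the symmetric subspace $\Lambda_{-m(n),m(n)}$, but the estimates are the same.
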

\begin{proof} First we prove that there exists a positive constant $C$ such that
$\sum_{j=1}^n \vert l_{j,n-1}(z)\vert \leq C \sqrt{n}$ for every $z \in \mathbb{T}$ and $n$ large enough. Indeed, taking into account (\ref{Eq:**}) and applying the hypothesis we get:
\begin{equation*}\begin{split} \sum_{j=1}^n \vert l_{j,n-1}(z)\vert= \sum_{j=1}^n \frac{\vert W_n(z) z_j^{p(n)} \vert }{\vert W'_n(z_j) (z-z_j) z^{p(n)} \vert}=\sum_{j=1}^n \frac{\vert W_n(z) \vert }{\vert W'_n(z_j) (z-z_j)  \vert}\leq
\frac{1}{B n} \sum_{j=1}^n \frac{\vert W_n(z) \vert }{\vert z-z_j  \vert} \leq \\\frac{1}{B n}\left( \sum_{j=1}^n \frac{\vert W_n(z) \vert^2 }{\vert z-z_j  \vert^2}\right)^{\frac{1}{2}}
 ( \sum_{j=1}^n 1)^{\frac{1}{2}}\leq
 \frac{\sqrt{L}}{B}\sqrt{n}.
 \end{split}\end{equation*}
Let us consider the Laurent polynomial of best uniform approximation to $F$, $T_{-p(n),q(n)}(z) \in \Lambda_{-p(n),q(n)}$.  If $E_{-p(n),q(n)}(F)= \max_{z \in \mathbb{T}}\vert  F(z) - T_{-p(n),q(n)}(z)\vert,$
then it holds that  $$E_{-p(n),q(n)}(F) \leq 2 \lambda(F,\frac{\pi}{s(n)}),$$ where $s(n)= \min(p(n),q(n))$, (see \cite{DG2}).
Since $\displaystyle \lim_{n \rightarrow \infty} \frac{\pi}{s(n)}=0$, then by hypothesis $\lambda(F,\frac{\pi}{s(n)})= o((\frac{\pi}{s(n)})^{\frac{1}{2}})$.\\
If we write
\begin{equation*}\begin{split}
    F(z)-L_{-p(n),q(n)}(F;z)=F(z)-T_{-p(n),q(n)}(z)-L_{-p(n),q(n)}(F;z)+T_{-p(n),q(n)}(z)=\\
    F(z)-T_{-p(n),q(n)}(z)-L_{-p(n),q(n)}(F-T_{-p(n),q(n)};z)=\\
  F(z)-T_{-p(n),q(n)}(z)- \sum_{j=1}^n l_{j,n-1}(z)(F(z_j)-T_{-p(n),q(n)}(z_j)),
\end{split}\end{equation*}
then we have
\begin{equation*}\begin{split}
   \vert  F(z)-L_{-p(n),q(n)}(F;z)\vert \leq \vert F(z)-T_{-p(n),q(n)}(z)\vert + \sum_{j=1}^n \vert l_{j,n-1}(z)\vert  \vert F(z_j)-T_{-p(n),q(n)}(z_j)\vert \leq \\
   E_{-p(n),q(n)}(F)( 1+ \sum_{j=1}^n \vert l_{j,n-1}(z)\vert)\leq  2 \lambda(F,\frac{\pi}{s(n)})(1+ C \sqrt{n})= \\
   2\frac{ \lambda(F,\frac{\pi}{s(n)})}{\left(\frac{\pi}{s(n)}\right)^{\frac{1}{2}}}\frac{\sqrt{\pi}}{\left(\frac{s(n)}{n-1} \right)^{\frac{1}{2}}}\frac{1+C\sqrt{n}}{\sqrt{n-1}},
   \end{split}\end{equation*}
and it is easy to prove that the last expression tends to zero because
 $ \displaystyle \lim_{n \rightarrow \infty} \frac{ \lambda(F,\frac{\pi}{s(n)})}{\left(\frac{\pi}{s(n)}\right)^{\frac{1}{2}}}=0$
 and
\begin{equation*}\lim_{n \rightarrow \infty}\frac{s(n)}{n-1}=\frac{1}{2}\left( \lim_{n \rightarrow \infty}\frac{p(n)}{n-1} +  \lim_{n \rightarrow \infty} \frac{q(n)}{n-1}- \lim_{n \rightarrow \infty}  \frac{\vert p(n)-q(n) \vert }{n-1}\right)=
\left\{
  \begin{array}{ll}
    1-r, & \hbox{if}\; r \in (\frac{1}{2},1), \\
    r, & \hbox{if}\; r \in (0,\frac{1}{2}].
  \end{array}
\right.
\end{equation*}
\end{proof}
\begin{Rem}
\begin{enumerate}
\item [(i)] Since $\lambda(F,\delta)=o(\delta^p)$ for $p>\frac{1}{2}$ implies $\lambda(F,\delta)=o(\delta^{\frac{1}{2}})$, then the preceding result is also valid for functions with modulus of continuity $o(\delta^p)$, with $p > \frac{1}{2}$, if $\delta \rightarrow 0$. Hence, in the sequel and  for simplicity,  we establish all the results with the condition $\lambda(F,\delta)=o(\delta^{\frac{1}{2}})$.
\item [(ii)]  Since it is clear that the nodal systems in Theorem   \ref{TL:1} satisfy the hypothesis of  Theorem \ref{TL:2};   we have that the result given in Theorem \ref{TL:1} is also valid for functions with modulus of continuity $o(\delta^{\frac{1}{2}})$, if $\delta \rightarrow 0$.
\end{enumerate}
\end{Rem}
Next we recall a sufficient condition given in \cite{BeCaM} in order that the nodal system satisfy the conditions imposed in the previous theorem. We use the so called para-orthogonal polynomials, (see \cite{JNT}, \cite{Gol} and \cite{Sim})  and  the class of  measures satisfying the Szeg\H{o} condition, (see \cite{Sze}, \cite{Sim}, \cite{Nev} and \cite{Ger}). Notice that the nodal systems in Theorem \ref{TL:1} are constituted by the $n$ roots of complex unimodular numbers, and indeed they are the $n$ roots of the para-orthogonal polynomials with respect to the Lebesgue measure on $[0, 2 \pi]$.

\begin{Thm}
 Let $\nu$ be a measure on $[0,2\pi]$ in the  Szeg\H{o} class with Szeg\H{o} function having analytic extension up to $\vert z \vert >1$. Let $\{\phi_n(z)\}$ be
the MOPS($\nu$) and  $\omega_n(z,\tau)=\phi_n(z)+ \tau \phi^*_n(z)$, with $\vert \tau \vert=1$ the para-orthogonal polynomials. Then there exist  positive constants $A, B_1, B_2$ and $L$ such that for every $z \in \mathbb{T}$ and $n$ large enough the following relations hold:
\begin{enumerate}
  \item [(i)]  $\vert \omega_n(z,\tau) \vert \leq A,$
  \item [(ii)]  $B_1 \leq \displaystyle \frac{\vert \omega_n'(z,\tau) \vert}{n} \leq B_2,$
  \item [(iii)] $\displaystyle \frac{\vert \omega_n(z,\tau) \vert^2}{n^2} \sum_{j=1}^n \displaystyle \frac{1}{\vert z-z_j \vert^2}\leq L,$ where we assume that $z_1,\cdots,z_n$ are the zeros of $\omega_n(z,\tau).$
\end{enumerate}
\label{LemL:1}\end{Thm}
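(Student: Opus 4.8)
The plan is to establish asymptotic estimates for the para-orthogonal polynomials $\omega_n(z,\tau)$ and their derivatives on $\mathbb{T}$ by exploiting the known strong asymptotics of the monic (or normalized) orthogonal polynomials $\phi_n(z)$ in the Szeg\H{o} class. Under the hypothesis that $\nu$ lies in the Szeg\H{o} class with Szeg\H{o} function $D(z)$ having analytic extension to $\vert z \vert > 1$, the classical Szeg\H{o} theory gives, uniformly for $z$ in a neighborhood of $\mathbb{T}$, that $\phi_n(z) \sim z^n \overline{D(1/\bar z)}^{-1}$ (equivalently $\phi_n^*(z) \sim D(z)^{-1}$), together with the corresponding uniform bounds. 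First I would record these strong asymptotic formulas for $\phi_n$ and $\phi_n^*$ and for their derivatives, noting that the analytic extension hypothesis is exactly what upgrades the boundary asymptotics to uniform statements valid in a full neighborhood of the circle, which is needed to differentiate them safely.

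With those in hand, I would prove (i) directly: since $\omega_n(z,\tau) = \phi_n(z) + \tau \phi_n^*(z)$ and $\vert \tau \vert = 1$, for $z \in \mathbb{T}$ one has $\vert \phi_n(z) \vert$ and $\vert \phi_n^*(z) \vert$ uniformly bounded (both behave like $\vert D \vert^{-1}$ times a unimodular factor), so $\vert \omega_n(z,\tau) \vert \leq A$ follows from the triangle inequality. For (ii), the dominant term in $\omega_n'(z,\tau)$ comes from differentiating the $z^n$-type factor, which produces a factor of size comparable to $n$; the leading behavior is $\omega_n'(z,\tau) \approx n z^{n-1}\overline{D(1/\bar z)}^{-1}$, and since $\overline{D(1/\bar z)}^{-1}$ is bounded above and below away from zero on $\mathbb{T}$ (the Szeg\H{o} function has no zeros on the circle), one extracts both the lower bound $B_1$ and the upper bound $B_2$ for $\vert \omega_n'(z,\tau) \vert / n$.

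The main obstacle is part (iii), the bound on $\dfrac{\vert \omega_n(z,\tau) \vert^2}{n^2} \sum_{j=1}^n \dfrac{1}{\vert z - z_j \vert^2}$, where $z_1,\dots,z_n$ are the zeros of $\omega_n$. The difficulty is that the sum is singular precisely when $z$ approaches a node, and controlling it requires quantitative information about the spacing of the zeros of the para-orthogonal polynomial on $\mathbb{T}$. The approach I would take is to write $\omega_n(z,\tau) = c_n \prod_{j=1}^n (z - z_j)$ and use the logarithmic derivative identity $\dfrac{\omega_n'(z,\tau)}{\omega_n(z,\tau)} = \sum_{j=1}^n \dfrac{1}{z - z_j}$, so that $\sum_j \dfrac{1}{\vert z - z_j \vert^2}$ can be related to $\sum_j \dfrac{1}{z-z_j}$ and to the derivative of that logarithmic derivative; I would then combine the uniform near-equispacing of the nodes (which follows from the asymptotics, giving $\vert z - z_j \vert$ comparable to the arc-length distance between the corresponding angles) with the boundedness $\vert \omega_n \vert \leq A$ and the lower bound $\vert \omega_n' \vert \geq B_1 n$ to dominate the singular term, the residual sum being comparable to $\frac{1}{n^2}\sum_{k} \frac{1}{(k/n)^2} = \mathcal{O}(1)$.

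Finally I would remark that (ii) and (iii) are exactly hypotheses (i) and (ii) of Theorem~\ref{TL:2} applied to $W_n = \omega_n(\cdot,\tau)$ (with $B = B_1$), so this proposition certifies that the zeros of such para-orthogonal polynomials form an admissible nodal system, thereby furnishing a concrete and natural class of nodes to which the convergence result of Theorem~\ref{TL:2} applies.
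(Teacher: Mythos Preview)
The paper does not actually prove this theorem: its entire proof is the single line ``See \cite{BeCaM}'', deferring the argument to a companion paper. So there is no in-paper argument to compare your proposal against; what you have written is a plausible reconstruction of what the cited proof presumably contains.

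Your treatment of (i) and (ii) is correct: the analytic-extension hypothesis on the Szeg\H{o} function is exactly what upgrades the boundary asymptotics of $\phi_n$ and $\phi_n^*$ (and hence of $\omega_n$) to uniform estimates on a full annulus containing $\mathbb{T}$, so one may differentiate and read off both the upper and the lower bound on $\vert\omega_n'\vert/n$, using that $(\phi_n^*)'$ stays $O(1)$ while $\phi_n'$ carries the factor $n$. One small caution for (iii): the logarithmic-derivative identity you invoke produces $\sum_j (z-z_j)^{-1}$ and, upon differentiation, $-\sum_j (z-z_j)^{-2}$, which are \emph{complex} sums and not the real quantity $\sum_j \vert z-z_j\vert^{-2}$ that you must bound; that detour does not lead anywhere directly. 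The argument that does work is the one you sketch immediately afterwards: the strong asymptotics force the zeros to be separated on $\mathbb{T}$ by arcs uniformly comparable to $2\pi/n$, so if $d$ denotes the distance from $z$ to the nearest node (necessarily $d\le C/n$) one has $\sum_j \vert z-z_j\vert^{-2}\le C_1 d^{-2}+C_2 n^2$, while (i) and (ii) give $\vert\omega_n(z)\vert\le \min(A,\,B_2 n d)$; multiplying these yields the uniform bound $L$. With that correction your outline is sound.
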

\begin{proof} See \cite{BeCaM}.
\end{proof}
Taking into account the preceding results, we are in conditions to prove the following corollary.

\begin{Cor}   Let $F$ be a continuous function on $\mathbb{T}$, with modulus of continuity $ \lambda(F,\delta)= o(\delta^{\frac{1}{2}})$, if $\delta \rightarrow 0$. Let $p(n)$ and $q(n)$ be two nondecreasing  sequences of nonnegative integers such that  $p(n)+q(n)=n-1$ and  $\displaystyle \lim_{n \rightarrow \infty}\frac{p(n)}{n-1}=r$ with $0<r <1$.

Let $\nu$ be a measure on $[0,2 \pi]$ in the Szeg\H{o} class with  Szeg\H{o} function having  analytic extension up to $\vert z \vert >1$. Let $\{\phi_n(z)\}$ be the MOPS($\nu$) and let $\omega_n(z,\tau)=\phi_n(z)+ \tau \phi^*_n(z)$, with $\vert \tau \vert=1$,  be the para-orthogonal polynomials.

If $L_{-p(n), q(n)}(F;z) \in \Lambda_{-p(n),q(n)}$ is the Laurent polynomial of Lagrange interpolation related to the function $F$ and  with nodal system  the zeros  of the para-orthogonal polynomials $\omega_n(z,\tau)$,
  then $\displaystyle \lim_{n \rightarrow \infty}L_{-p(n), q(n)}(F;z)=F(z)$ uniformly on $\mathbb{T}$.
\label{Cor:A1}\end{Cor}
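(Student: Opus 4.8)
The plan is to show that the hypotheses of Theorem~\ref{TL:2} are satisfied when the nodal system is taken to be the zeros of the para-orthogonal polynomials $\omega_n(z,\tau)$, and then simply invoke that theorem. This is almost immediate once we identify the nodal polynomial $W_n(z)$ with the para-orthogonal polynomial, up to a unimodular constant. First I would observe that $\omega_n(z,\tau)=\phi_n(z)+\tau\phi_n^*(z)$ has exactly $n$ distinct zeros $z_1,\dots,z_n$ on $\mathbb{T}$ (a standard property of para-orthogonal polynomials), so these do constitute a legitimate nodal system in the sense required by the interpolation problem. Since $\omega_n$ is a polynomial of degree $n$ with the same $n$ simple zeros as $W_n(z)=\prod_{j=1}^n(z-z_j)$, the two differ only by the leading coefficient of $\omega_n$; consequently $\vert W_n(z)\vert$ and $\vert\omega_n(z,\tau)\vert$ differ by a constant factor, and likewise for their derivatives.

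The core of the argument is to translate conditions (i) and (ii) of Theorem~\ref{TL:2} into the estimates supplied by Theorem~\ref{LemL:1}. Condition (ii) of Theorem~\ref{TL:2}, namely $\frac{\vert W_n(z)\vert^2}{n^2}\sum_{j=1}^n\frac{1}{\vert z-z_j\vert^2}\leq L$, is exactly condition (iii) of Theorem~\ref{LemL:1} after replacing $W_n$ by $\omega_n$; since the ratio $W_n/\omega_n$ is a nonzero constant, the quantity $\frac{\vert W_n(z)\vert^2}{n^2}\sum_{j=1}^n\frac{1}{\vert z-z_j\vert^2}$ equals $\frac{\vert\omega_n(z,\tau)\vert^2}{n^2}\sum_{j=1}^n\frac{1}{\vert z-z_j\vert^2}$ multiplied by $\vert W_n/\omega_n\vert^2$, and is therefore uniformly bounded. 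For condition (i) of Theorem~\ref{TL:2}, the lower bound $B\leq\frac{\vert W_n'(z)\vert}{n}$, I would use the lower bound $B_1\leq\frac{\vert\omega_n'(z,\tau)\vert}{n}$ from part (ii) of Theorem~\ref{LemL:1}, again adjusting by the constant $\vert W_n/\omega_n\vert$. Thus a suitable positive constant $B$ exists.

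Once both hypotheses are verified, the conclusion $\lim_{n\to\infty}L_{-p(n),q(n)}(F;z)=F(z)$ uniformly on $\mathbb{T}$ follows directly from Theorem~\ref{TL:2}, since $F$ is continuous with modulus of continuity $o(\delta^{1/2})$ and $p(n),q(n)$ satisfy the stated growth assumptions. The main obstacle, such as it is, is really bookkeeping: one must be careful about the relationship between $W_n$ and $\omega_n$. Strictly speaking $W_n$ is monic while $\omega_n$ has leading coefficient equal to that of $\phi_n$ (its Verblunsky/leading data), so the constant relating them is $\kappa_n=$ (leading coefficient of $\omega_n$), and one should check that this constant cancels cleanly in both ratios appearing in conditions (i) and (ii)—which it does, because (i) involves $\vert W_n'\vert$ scaled by $n$ and (ii) involves $\vert W_n\vert^2$ homogeneously against the sum. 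Since the unimodular and norming factors enter each inequality with matching powers, they rescale $B_1$ and $L$ only by fixed positive constants, leaving the inequalities intact. Hence no genuine analytic difficulty arises beyond invoking Theorem~\ref{LemL:1}.
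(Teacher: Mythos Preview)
Your approach is correct and is exactly the paper's: the paper's proof simply notes that the zeros of $\omega_n(z,\tau)$ lie on $\mathbb{T}$ and then declares the result immediate from Theorems~\ref{TL:2} and~\ref{LemL:1}. One small clarification on your bookkeeping concern: the leading coefficient of $\omega_n$ is $1+\tau\,\overline{\phi_n(0)}$, which is $n$-dependent but tends to $1$ (since $\phi_n(0)\to 0$ for Szeg\H{o}-class measures) and hence is uniformly bounded above and away from zero for large $n$, so the rescaling of $B_1$ and $L$ is indeed harmless.
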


\begin{proof} Taking into account that the zeros of $\omega_n(z,\tau)$ belong to $\mathbb{T}$, (see \cite{JNT}), the result is immediate from Theorems  \ref{TL:2} and \ref{LemL:1}.
\end{proof}
\begin{Rem} Notice that the preceding result is valid for the Bernstein-Szeg\H{o} measures, (see \cite{Sim}).
\end{Rem}

\section{Lagrange interpolation  on $[-1,1]$}
In this  section we present some consequences of  Theorem  \ref{TL:2} concerning the Lagrange interpolation problems on  $[-1,1]$. Let us recall that the Lagrange interpolation polynomial related to a nodal system $\{x_j\}_{j=1}^n \subset [-1,1]$ and satisfying the conditions $\{u_j\}_{j=1}^n$ is given by $l_{n-1}(x)=\displaystyle \sum_{j=1}^n \frac{p_n(x)}{p'_n(x_j)(x-x_j)}u_j,$ where $p_n(x)= \displaystyle \Pi_{j=1}^n(x-x_j)$.

\begin{Thm} Let $p_n(x)= \prod_{j=1}^n(x-x_j)$ be a nodal system in $[-1,1]$ such that $W_{2n}(z)=2^nz^n p_n(\frac{z+1/z}{2})$ satisfies  the following inequalities
$$ B \leq \frac{\vert W_{2n}'(z) \vert }{2n},$$
$$  \vert W_{2n}(z) \vert^2 \sum_{j=1}^{n} \left( \frac{1}{\vert z-z_j \vert^2}+ \frac{1}{\vert z-\overline{z_j} \vert^2}\right)\leq L(2n)^2,$$ with $\frac{z_j+\frac{1}{z_j}}{2}=x_j$ for $j=1, \cdots, n$ and
 for some  positive constants $B$ and $ L$,   $n$ large enough and  every $z \in \mathbb{T}$.

Let  $f$  be  a continuous function on $[-1,1]$  such that $\lambda(f,\delta)=o(\delta^{\frac{1}{2}})$, if $\delta \rightarrow 0$.

If  $l_{n-1}(f,x)$ is the Lagrange  interpolation polynomial such that $l_{n-1}(f,x_j)=f(x_j)$ for $j=1,\cdots,n,$
 then $l_{n-1}(f,x)$ converges to $f(x)$ uniformly on $[-1,1]$.
\label{Thm:4} \end{Thm}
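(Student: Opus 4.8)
The plan is to reduce the statement to the circle result of Theorem \ref{TL:2} via the Szeg\H{o} (Joukowski) transformation $x=\frac{z+1/z}{2}$, which for $z=e^{i\theta}$ gives $x=\cos\theta$ and maps $\mathbb{T}$ onto $[-1,1]$. First I would lift the data: each node $x_j\in(-1,1)$ has two preimages $z_j,\overline{z_j}$ on $\mathbb{T}$ with $\frac{z_j+1/z_j}{2}=x_j$, so the $n$ real nodes produce $2n$ distinct unimodular nodes $\{z_j,\overline{z_j}\}_{j=1}^n$. A direct computation shows that $W_{2n}(z)=2^nz^np_n(\frac{z+1/z}{2})$ is monic of degree $2n$ with exactly these $2n$ zeros, i.e. it is the nodal polynomial $\prod_{j=1}^n(z-z_j)(z-\overline{z_j})$ of the lifted system. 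The crucial observation is then that the two hypotheses imposed on $W_{2n}$ are \emph{verbatim} conditions (i) and (ii) of Theorem \ref{TL:2} written for a nodal system of $2n$ points (with $n$ replaced by $2n$). Hence the lifted nodal system satisfies the hypotheses of Theorem \ref{TL:2}.

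Next I would transfer the function and its smoothness to the circle. Set $F(z):=f(\frac{z+1/z}{2})$, so that $F(e^{i\theta})=f(\cos\theta)$; then $F$ is continuous on $\mathbb{T}$ and symmetric, $F(z)=F(1/z)$. Because $|\cos\theta-\cos\phi|\le|\theta-\phi|$, one has $\lambda(F,\delta)\le\lambda(f,\delta)=o(\delta^{1/2})$, so $F$ meets the modulus-of-continuity requirement of Theorem \ref{TL:2}.

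The heart of the argument is the identification of the algebraic interpolant with a circle interpolant, and here the choice of the Laurent indices is essential. I would take $p(2n)=n-1$ and $q(2n)=n$, so that $p(2n)+q(2n)=2n-1$ and $\frac{p(2n)}{2n-1}\to\frac12\in(0,1)$, as required. Pulling back the algebraic interpolant, put $\mathcal{L}(z):=l_{n-1}(f;\frac{z+1/z}{2})$. Since $l_{n-1}(f;\cdot)$ has degree at most $n-1$, $\mathcal{L}$ is a symmetric Laurent polynomial in $\Lambda_{-(n-1),n-1}\subset\Lambda_{-(n-1),n}$, and it interpolates $F$ at every lifted node: $\mathcal{L}(z_j)=l_{n-1}(f;x_j)=f(x_j)=F(z_j)$, while $\mathcal{L}(\overline{z_j})=l_{n-1}(f;x_j)=F(\overline{z_j})$ by symmetry. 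As Laurent interpolation in $\Lambda_{-(n-1),n}$ at $2n$ distinct unimodular nodes is uniquely solvable (equivalently, $z^{n-1}\mathcal{L}(z)$ is the ordinary interpolating polynomial of degree $\le 2n-1$ of the data $z_k^{\,n-1}F(z_k)$), it follows that $L_{-(n-1),n}(F;z)=\mathcal{L}(z)$.

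Finally I would invoke Theorem \ref{TL:2} for the lifted system, obtaining $L_{-(n-1),n}(F;z)\to F(z)$ uniformly on $\mathbb{T}$; translating back through $x=\cos\theta$ and using $\mathcal{L}(e^{i\theta})=l_{n-1}(f;\cos\theta)$ together with $F(e^{i\theta})=f(\cos\theta)$ gives $\max_{x\in[-1,1]}|l_{n-1}(f;x)-f(x)|\to0$, which is the claim. I expect the main obstacle to be the parity-sensitive identification in the third step: one must choose the indices so that $\mathcal{L}$ actually lies in the selected space $\Lambda_{-p(2n),q(2n)}$ and so that uniqueness then forces the circle interpolant to coincide with the symmetric pullback. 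The accompanying technical point is to confirm that the $2n$ lifted nodes are genuinely distinct, which requires $x_j\in(-1,1)$ so that $z_j\neq\overline{z_j}$.
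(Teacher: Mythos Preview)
Your proposal is correct and follows essentially the same route as the paper: lift nodes and function to $\mathbb{T}$ via $x=\tfrac{z+1/z}{2}$, verify that $W_{2n}$ is the nodal polynomial of the $2n$ lifted points so that the stated inequalities are exactly hypotheses (i)--(ii) of Theorem~\ref{TL:2}, and then identify $l_{n-1}(f;x)$ with the Laurent interpolant of $F$ and invoke Theorem~\ref{TL:2}. The only cosmetic differences are that the paper picks the space $\Lambda_{-n,n-1}$ and verifies the identification by an explicit computation with the fundamental polynomials, whereas you pick $\Lambda_{-(n-1),n}$ and argue by uniqueness; both choices work since the symmetric pullback lies in $\Lambda_{-(n-1),n-1}$, and your distinctness worry about $x_j\in(-1,1)$ is in fact forced by the lower bound on $|W_{2n}'|$.
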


\begin{proof}  It is easy to see that the polynomial $W_{2n}(z)$ has the following expression $W_{2n}(z)= \displaystyle \Pi_{j=1}^n(z-z_j)(z-\overline{z_j})$, with $\frac{z_j+\overline{z_j}}{2}=x_j$.\\
Let us define a continuous function on $\mathbb{T}$ by $F(z)=F(\overline{z})=f(x)$, with $x= \frac{z+ \frac{1}{z}}{2}$ and $z \in \mathbb{T}$. It is clear that
 $$\lambda(F,\delta)=\sup_{z_1,z_2 \in \mathbb{T};\vert z_1-z_2 \vert < \delta} \vert F(z_1)-F(z_2) \vert \leq \sup_{x_1,x_2 \in [-1,1];\vert x_1-x_2 \vert < \delta} \vert f(x_1)-f(x_2) \vert = \lambda(f,\delta).$$
If we take $W_{2n}(z)$ as nodal system on  $\mathbb{T}$, we can  consider the following Lagrange  interpolation problem: find the Laurent polynomial of Lagrange interpolation $L_{-n,n-1}(F;z) \in \Lambda_{-n,n-1}$ satisfying the interpolation conditions
\begin{equation*}
    L_{-n,n-1}(F;z_j)=L_{-n,n-1}(F;\overline{z_j})=f(x_j),\; j=1, \cdots, n,
\end{equation*}
 By applying  Theorem  \ref{TL:2} we have that $\lim_{n \rightarrow \infty} L_{-n,n-1}(F;z)=F(z)$ uniformly on $\mathbb{T}$.\\
On the other hand, for $x=\frac{z+\frac{1}{z}}{2}$ and $ z \in \mathbb{T}$ it holds
\begin{equation*}\begin{split}
    L_{-n,n-1}(F;z)=\sum_{j=1}^n \frac{W_{2n}(z) z_j^n}{z^n W'_{2n}(z_j)(z-z_j)} F(z_j)+\sum_{j=1}^n \frac{W_{2n}(z) \overline{z_j}^n}{z^n W'_{2n}(\overline{z_j})(z-\overline{z_j})} F(\overline{z_j})=\\
\sum_{j=1}^n \frac{p_n(x)}{p'_n(x_j)(x-x_j)}f(x_j)= l_{n-1}(f;x).
\end{split}\end{equation*}
Hence
 $\displaystyle \lim_{n\rightarrow \infty}l_{n-1}(f,x)=f(x)$
uniformly on $[-1,1]$.\\
\end{proof}
As a consequence we obtain, in the next corollary, a result that was proved by Szeg\H{o} in \cite{Sze} under weaker conditions. Although our result is not new,  we give the proof because the way in which it is obtained is different from Szeg\H{o}'s proof.

\begin{Cor}
Let  $f$  be   a continuous function on $[-1,1]$ such that $\lambda(f,\delta)=o(\delta^{ \frac{1}{2}})$, if $\delta \rightarrow 0$. Let $d \mu(x)=w(x)dx$ be a finite positive Borel measure on $[-1,1]$ satisfying the Szeg\H{o} condition $\int_{-1}^1 \frac{\log w(x)}{\sqrt{1-x^2}}dx > -\infty$,   and let $\{ P_n(x)\}$ be the MOPS($\mu$). Assume that
the function
$w(x)\sqrt{1-x^2}$ is positive on $[-1,1]$ and it is   analytic in an open set containing  $[-1,1]$.\\
 If $l_{n-1}(f,x)$ is the Lagrange interpolation polynomial satisfying the interpolation conditions $l_{n-1}(f,x_j)=f(x_j),\; j=1, \cdots,n$, where $\{x_j\}_{j=1}^n$ are the zeros of the orthogonal polynomial  $P_n(x)$, then
  \begin{equation*}
   \displaystyle \lim_{n\rightarrow \infty}l_{n-1}(f,x)=f(x)
\end{equation*}
uniformly on $[-1,1]$.
\label{Cor:3}\end{Cor}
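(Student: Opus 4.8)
The plan is to derive the corollary directly from Theorem~\ref{Thm:4}: that theorem already converts uniform convergence on $\mathbb{T}$ into uniform convergence on $[-1,1]$, so the only thing to check is that the nodal polynomial associated with the zeros of $P_n$ satisfies the two inequalities required there. I would verify them by transferring the problem to the unit circle through the Szeg\H{o} transformation and identifying the pertinent circle polynomial as a para-orthogonal polynomial, which then lets me quote the estimates of Theorem~\ref{LemL:1}. Concretely, I would attach to $\mu$ the measure $\nu$ on $[0,2\pi]$ induced by $x=\cos\theta$, whose weight is $\tilde{w}(\theta)=w(\cos\theta)\vert\sin\theta\vert$. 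The Szeg\H{o} condition $\int_{-1}^{1}\frac{\log w(x)}{\sqrt{1-x^2}}\,dx>-\infty$ together with the integrability of $\log\vert\sin\theta\vert$ shows that $\nu$ belongs to the Szeg\H{o} class; moreover, since $w(x)\sqrt{1-x^2}$ is positive and analytic on an open set containing $[-1,1]$, and $\tilde{w}(\theta)=(w\sqrt{1-x^2})(\cos\theta)$ is therefore positive and analytic in $\theta$, the Szeg\H{o} function of $\nu$ admits an analytic extension across $\vert z\vert=1$. These are exactly the standing hypotheses of Theorem~\ref{LemL:1}.

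Next I would use the classical Szeg\H{o}-transformation identity. Because $\tilde{w}$ is even, $\nu$ is invariant under $z\mapsto\overline{z}$, its monic orthogonal polynomials $\phi_n$ have real coefficients, and the Laurent polynomial $z^{-n}\bigl(\phi_{2n}(z)+\phi_{2n}^{*}(z)\bigr)$ is symmetric under $z\mapsto 1/z$, hence a polynomial of degree $n$ in $x=\frac{z+1/z}{2}$ which is orthogonal with respect to $\mu$; comparing leading coefficients gives $W_{2n}(z)=2^{n}z^{n}p_n\bigl(\frac{z+1/z}{2}\bigr)=c_n\,\omega_{2n}(z,1)$ with $\omega_{2n}(z,1)=\phi_{2n}(z)+\phi_{2n}^{*}(z)$. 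Thus $W_{2n}$ is, up to the constant $c_n$, the para-orthogonal polynomial with $\tau=1$, its $2n$ zeros being the conjugate pairs $z_1,\dots,z_n,\overline{z_1},\dots,\overline{z_n}$ with $\frac{z_j+\overline{z_j}}{2}=x_j$. Since $\nu$ lies in the Szeg\H{o} class its Verblunsky coefficients tend to $0$, so the leading coefficient $1+\overline{\phi_{2n}(0)}$ of $\omega_{2n}(z,1)$ tends to $1$ and $\vert c_n\vert$ stays bounded away from $0$ and $\infty$ for $n$ large.

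Finally I would read off the two inequalities. The sum $\sum_{j=1}^{n}\bigl(\frac{1}{\vert z-z_j\vert^{2}}+\frac{1}{\vert z-\overline{z_j}\vert^{2}}\bigr)$ runs over all $2n$ zeros of $\omega_{2n}(z,1)$, so Theorem~\ref{LemL:1} applied with $n$ replaced by $2n$ gives $B_1\le\frac{\vert\omega_{2n}'(z,1)\vert}{2n}\le B_2$ and $\frac{\vert\omega_{2n}(z,1)\vert^{2}}{(2n)^{2}}\sum_{j=1}^{n}\bigl(\frac{1}{\vert z-z_j\vert^{2}}+\frac{1}{\vert z-\overline{z_j}\vert^{2}}\bigr)\le L$ for all $z\in\mathbb{T}$ and $n$ large. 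Multiplying the first relation by $\vert c_n\vert$ and the second by $\vert c_n\vert^{2}$, and using that $\vert c_n\vert$ is bounded above and below, yields precisely the hypotheses $B\le\frac{\vert W_{2n}'(z)\vert}{2n}$ and $\vert W_{2n}(z)\vert^{2}\sum_{j=1}^{n}\bigl(\frac{1}{\vert z-z_j\vert^{2}}+\frac{1}{\vert z-\overline{z_j}\vert^{2}}\bigr)\le L(2n)^{2}$ of Theorem~\ref{Thm:4}. Since $f$ is continuous with $\lambda(f,\delta)=o(\delta^{1/2})$, Theorem~\ref{Thm:4} then gives the uniform convergence of $l_{n-1}(f,x)$ to $f$ on $[-1,1]$. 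The step I expect to be the main obstacle is the identification in the second paragraph---pinning down the constant $c_n$ in $W_{2n}=c_n\,\omega_{2n}(\cdot,1)$ and verifying rigorously that analyticity of $w\sqrt{1-x^{2}}$ forces the analytic extension of the Szeg\H{o} function of $\nu$---since both rest on detailed Szeg\H{o} theory rather than on the soft estimates already available.
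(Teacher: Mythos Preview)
Your proposal is correct and follows essentially the same route as the paper: apply the Szeg\H{o} transformation to obtain $d\nu(\theta)=\tfrac12 w(\cos\theta)\lvert\sin\theta\rvert\,d\theta$, identify $W_{2n}$ with the para-orthogonal polynomial $\omega_{2n}(z,1)$ up to the constant $(1+\phi_{2n}(0))^{-1}$, invoke Theorem~\ref{LemL:1} for the required estimates, and conclude via Theorem~\ref{Thm:4}. Your concern in the last paragraph is not really an obstacle: the paper simply cites \cite{Nev} for the analytic extension of the Szeg\H{o} function of $\nu$, and your treatment of the constant $c_n$ (using $\phi_{2n}(0)\to 0$) is in fact more explicit than the paper's own one-line appeal to Theorem~\ref{LemL:1}.
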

\begin{proof} By using   the Szeg\H{o}  transformation, (see \cite{Sze}), the measure  $d \mu(x)$ becomes into the measure  $d \nu(\theta)=\frac{1}{2}w( \cos \theta) \vert \sin \theta \vert d \theta$,  which is in the  Szeg\H{o} class with Szeg\H{o} function having analytic extension up to $ \vert z \vert >1$, (see \cite{Nev}). If we denote by $\{ \phi_n(z)\}$ the MOPS($\nu$) and by  $\{ P_n(z)\}$ the MOPS($\mu$), then both sequences  are related by
\begin{equation*}
    P_n(x)= \frac{1}{2^n(1+ \phi_{2n}(0))}\frac{\phi_{2n}(z)+ \phi_{2n}^*(z)}{z^n}=\frac{1}{2^n(1+ \phi_{2n}(0))}\frac{\omega_{2n}(z,1)}{z^n}.
\end{equation*}
The zeros of $P_n(x)$, $x_1, \cdots,x_n,$ are simple and belongs to $(-1,1)$ and they are related with the zeros of $\omega_{2n}(z,1)$, $z_1,\cdots,z_n, z_{n+1}=\overline{z_n}, \cdots, z_{2n}=\overline{z_1},$ by $x_j= \frac{z_j+\overline{z_j}}{2},\;j=1,\cdots,n.$
By applying Theorem  \ref{LemL:1} we get that the system $\omega_{2n}(z,1)$ satisfies the hypothesis of Theorem \ref{Thm:4}. Then we have that $l_{n-1}(f,x)$ converges to $f(x)$ uniformly on $[-1,1]$.
\end{proof}

Analogous results can be obtained for other nodal systems related with those given in Corollary \ref{Cor:3}. Let $d \mu_1(x)=w(x)dx$ be a finite positive Borel measure on $[-1,1]$. Let us consider the measures
\begin{equation}d\mu_2(x)=(1-x^2)d \mu_1(x), \;d \mu_3(x)=(1-x) d \mu_1(x), \;d \mu_4(x)= (1+x) d \mu_1(x),
\label{eq:tri}\end{equation}
 and let us denote the MOPS with respect to these measures by $\{P_n(x, \mu_i)\}_{i=1}^4$. Let us consider the Szeg\H{o} transformed measure of $d \mu_1(x)$, $d \nu(\theta)= \frac{1}{2 }w(\cos \theta) \vert \sin \theta \vert d\theta$ with MOPS($\nu$), $\{\phi_n(z)\}$. Taking into account the relation between the measures, we can relate the  orthogonal sequences as follows, (see \cite{Gar})
\begin{equation*}
    \begin{split}
    P_n(x, \mu_1)= \frac{1}{2^n(1+ \phi_{2n}(0))z^n}w_{2n}(z,1),\\
   \sqrt{1-x^2} P_n(x, \mu_2)= \frac{1}{2^{n+1}\imath (1- \phi_{2n+2}(0))z^{n+1}}w_{2n+2}(z,-1),\\
   \sqrt{1-x} P_n(x, \mu_3)= \frac{1}{2^{n+\frac{1}{2}}\imath (1- \phi_{2n+1}(0))z^{n+\frac{1}{2}}}w_{2n+1}(z,-1),\\
   \sqrt{1+x} P_n(x, \mu_4)= \frac{1}{2^{n+\frac{1}{2}} (1+\phi_{2n+1}(0))z^{n+\frac{1}{2}}}w_{2n+1}(z,1).\\
\end{split}\end{equation*}
We denote by $\pm1, x_1,\cdots,x_n$ the zeros of $\sqrt{1-x^2} P_n(x, \mu_2)$, by $1, y_1,\cdots,y_n$ the zeros of $\sqrt{1-x} P_n(x, \mu_3)$ and by $-1, v_1,\cdots,v_n$ the zeros of $\sqrt{1+x} P_n(x, \mu_4)$.\\
 If we denote by $\pm 1,z_1,\cdots,z_n,\overline{z_1},\cdots, \overline{z_n}$ the zeros of $\omega_{2n+2}(z,-1)$, by $ 1,w_1,\cdots,w_n,\overline{w_1},\cdots, \overline{w_n}$ the zeros of $w_{2n+1}(z,-1)$, by $ -1,u_1,\cdots,u_n,\overline{u_1},\cdots, \overline{u_n}$ the zeros of $w_{2n+1}(z,1)$, then the following relations hold:
 $\Re (z_i)=x_i, \;\Re (w_i)=y_i,\; \Re (u_i)=v_i,\, i=1,\cdots,n.$ By taking nodal systems related with the zeros of $P_n(x, \mu_i),\;i=2,3,4$ we obtain the next result.

\begin{Thm} Let $f$ be a continuous function on $[-1,1]$ such that $\lambda(f,\delta)=o( \delta^{\frac{1}{2}})$, if $\delta \rightarrow 0$. Let $\mu_1$ be a finite positive Borel measure on $[-1,1]$, $d \mu_1(x)=w(x)dx$, satisfying the Szeg\H{o} condition. Assume that the function $w(x) \sqrt{1-x^2} $ is positive in $[-1,1]$ and it is analytic in an open set containing $[-1,1]$. Let  $d \mu_i(x), i=2,3,4,$ be the measures  given in (\ref{eq:tri}). Let us consider the Lagrange interpolation polynomials for the function $f$ with the following nodal systems:
\begin{enumerate}
\item [(i)] the zeros of $P_n(x, \mu_2)$ joint with $\pm 1$,
\item [(ii)] the zeros of $P_n(x, \mu_3)$ joint with $ 1$,
\item [(iii)] the zeros of $P_n(x, \mu_4)$ joint with $-1$.
\end{enumerate}
 Then the corresponding  Lagrange interpolation polynomials uniformly converge to $f(x)$ on $[-1,1]$.
\label{TL:3}\end{Thm}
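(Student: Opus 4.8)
My strategy is to reduce each of the three cases to Theorem \ref{Thm:4}, exactly as Corollary \ref{Cor:3} reduced the plain Gauss case. The essential observation is that each modified measure $\mu_i$ ($i=2,3,4$) has, via the Szeg\H{o} transformation, its orthogonal polynomials expressed through para-orthogonal polynomials $\omega_m(z,\tau)$ on $\mathbb{T}$ with $\tau=\pm1$, as recorded in the displayed relations preceding the statement. So for each case I would form the corresponding nodal polynomial on $[-1,1]$ — namely the monic polynomial whose roots are the zeros of $P_n(x,\mu_i)$ together with the appropriate endpoint(s) $\pm1$ — and check that its lifted counterpart $W_{2m}(z)=2^mz^mp(\tfrac{z+1/z}{2})$ coincides (up to a harmless nonzero constant) with $\omega_m(z,\tau)$. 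Because the extra endpoints are precisely the real zeros $\pm1$ that $\omega_m(z,\tau)$ already carries for the relevant choice of $\tau$, the lift of the augmented nodal system on $[-1,1]$ is exactly a para-orthogonal polynomial. Once this identification is made, Theorem \ref{LemL:1} supplies the two inequalities $(i)$ and $(ii)$ required in the hypothesis of Theorem \ref{Thm:4} for the system $\{z_j\}$, and I conclude uniform convergence on $[-1,1]$.

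Concretely, in case $(i)$ the polynomial $\sqrt{1-x^2}\,P_n(x,\mu_2)$ is a polynomial of degree $n+1$ in $x$ whose zeros are $\pm1$ together with the $n$ zeros of $P_n(x,\mu_2)$; its lift is $\omega_{2n+2}(z,-1)$, a para-orthogonal polynomial for $\nu$. In cases $(ii)$ and $(iii)$ the objects $\sqrt{1-x}\,P_n(x,\mu_3)$ and $\sqrt{1+x}\,P_n(x,\mu_4)$ have degree $n+\tfrac12$ after the square-root factor, and lift to $\omega_{2n+1}(z,-1)$ and $\omega_{2n+1}(z,1)$ respectively; here the single endpoint ($1$ or $-1$) is the one real zero forced on $\mathbb{T}$ by the corresponding sign of $\tau$. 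In every case the hypothesis that $w(x)\sqrt{1-x^2}$ is positive and analytic in a neighborhood of $[-1,1]$ guarantees that $\nu$ lies in the Szeg\H{o} class with Szeg\H{o} function analytically extendable past $|z|=1$, which is exactly what Theorem \ref{LemL:1} demands. I would then transfer back: setting $F(z)=f(\tfrac{z+1/z}{2})$ gives a continuous even function on $\mathbb{T}$ with $\lambda(F,\delta)\le\lambda(f,\delta)=o(\delta^{1/2})$, and the Laurent interpolant $L_{-m,m-1}(F;z)$ restricts on $x=\tfrac{z+1/z}{2}$ to the real Lagrange interpolant $l_{n-1}(f;x)$ on the augmented node set, as in the computation in the proof of Theorem \ref{Thm:4}.

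The one point that genuinely needs care — and which I regard as the main obstacle — is the bookkeeping at the endpoints $\pm1$. Unlike in Corollary \ref{Cor:3}, the nodal set now contains one or two real nodes that are \emph{not} among the $z_j,\overline{z_j}$ pairs but are fixed points of the conjugation $z\mapsto\bar z=1/z$ on $\mathbb{T}$. I must verify that augmenting the interpolation conditions by the values $f(\pm1)$ corresponds correctly to interpolation at the simple real zeros of $\omega_m(z,\tau)$, and that the double-counting in the sum $\sum_j(\tfrac{1}{|z-z_j|^2}+\tfrac{1}{|z-\overline{z_j}|^2})$ appearing in the hypothesis of Theorem \ref{Thm:4} is matched against condition $(iii)$ of Theorem \ref{LemL:1} with the full zero set of $\omega_m$, endpoints included. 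This is a finite, controlled discrepancy: the endpoint terms contribute boundedly and do not affect the $o(\sqrt{n})$ growth estimate for the Lebesgue-type sum, so the argument of Theorem \ref{Thm:4} goes through verbatim once the identification of $W$ with $\omega_m$ is established. Having checked this, the three cases then follow uniformly and the theorem is proved.
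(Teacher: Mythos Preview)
Your reduction to Theorem \ref{Thm:4} does not go through, and the obstacle is precisely at the endpoints you flag, but it is more serious than a bookkeeping issue. The map $p\mapsto W_{2m}(z)=2^{m}z^{m}p\bigl(\tfrac{z+1/z}{2}\bigr)$ sends a simple real root $x_j\in(-1,1)$ to a pair of simple conjugate roots $z_j,\overline{z_j}$, but it sends a simple root at $x=1$ (resp.\ $x=-1$) to a \emph{double} root at $z=1$ (resp.\ $z=-1$), since $\tfrac{z+1/z}{2}-1=\tfrac{(z-1)^2}{2z}$. Hence if $p$ is the monic nodal polynomial on $[-1,1]$ whose zeros are those of $P_n(x,\mu_2)$ together with $\pm1$, its lift $W_{2(n+2)}$ has degree $2n+4$ with double zeros at $\pm1$; it is not a constant multiple of $\omega_{2n+2}(z,-1)$, whose degree is $2n+2$ and whose zeros at $\pm1$ are simple. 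In cases (ii) and (iii) the mismatch is even starker: the real nodal polynomial has degree $n+1$ and lifts to degree $2n+2$, whereas $\omega_{2n+1}(z,\mp1)$ has odd degree $2n+1$, so no identification ``up to a constant'' is possible. (Your remark that $\sqrt{1\mp x}\,P_n$ ``has degree $n+\tfrac12$'' is a symptom of this: those functions are not polynomials in $x$ at all.) Consequently hypothesis (i) of Theorem \ref{Thm:4} fails, since $W_{2m}'(\pm1)=0$; the distinctness of the $z_j$ required for Theorem \ref{TL:2} fails; and the algebraic identity $L_{-m,m-1}(F;z)=l_{m-1}(f;x)$ in the proof of Theorem \ref{Thm:4} breaks down because the chain-rule factor $\tfrac{dx}{dz}=\tfrac{z^2-1}{2z^2}$ vanishes at $z=\pm1$.

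The paper avoids the lifting altogether and goes in the opposite direction. It takes $\omega_m(z,\tau)$ (with its $m$ simple zeros on $\mathbb{T}$, endpoints included) as the nodal polynomial on the circle, applies Corollary \ref{Cor:A1} directly to obtain uniform convergence of the Laurent interpolant $L_{-p,q}(F;z)$, and then \emph{descends} to $[-1,1]$ by symmetrization:
\[
l(f,x)\;=\;\tfrac12\bigl(L(F;z)+L(F;1/z)\bigr),\qquad x=\tfrac{z+1/z}{2}.
\]
This average is automatically invariant under $z\mapsto1/z$, hence a polynomial in $x$ of the correct degree, and it clearly meets the interpolation conditions at every node including $\pm1$. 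The convergence on $[-1,1]$ then follows from the convergence on $\mathbb{T}$ without any delicate endpoint analysis. This symmetrization step is the key idea your proposal is missing.
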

\begin{proof} By the Szeg\H{o} transformation the measure $d \mu_1(x)$ becomes into the measure\\
 $d \nu(\theta)= \frac{1}{2 }w(\cos \theta) \vert \sin \theta \vert d\theta$, which is in the Szeg\H{o} class with Szeg\H{o} function having analytic extension up to $\vert z \vert >1$. We denote by $\{ \phi_n(z)\}$ the MOPS($\nu$). If we define a continuous function $F$ on  $\mathbb{T}$ by  $F(z)=F(\overline{z})=f(x)$ with $x=\frac{z+\frac{1}{z}}{2}$ and  $z \in \mathbb{T}$, then it is clear that $\lambda(F,\delta) \leq \lambda(f,\delta)$.

(i) We consider the para-orthogonal polynomial $\omega_{2n+2}(z,-1)$, whose zeros are $\pm 1, z_1,\cdots, z_n,$ $ \overline{z_1},\cdots, \overline{z_n} \in \mathbb{T}$ and they are related with the zeros of $P_n(x,\mu_2)$ by $x_j=\frac{z_j+ \overline{z_j}}{2};\;j=1,\cdots, n.$\\
    Let us consider the following Lagrange interpolation problem: find the Laurent polynomial of Lagrange interpolation $L_{-(n+1),n}(F;z)$ satisfying
\begin{equation*}\begin{split}
    L_{-(n+1),n}(F;z_j)=L_{-(n+1),n}(F;\overline{z_j})=F(z_j),\; j=1,\cdots,n,\\
     L_{-(n+1),n}(F;1)=F(1), \, L_{-(n+1),n}(F;-1)=F(-1).
\end{split}\end{equation*}
By applying Corollary   \ref{Cor:A1} we have that $\displaystyle  \lim_{n \rightarrow \infty}  L_{-(n+1),n}(F;z)=F(z)$ uniformly on $\mathbb{T}$.  If we take  $$ l_{n+1}(f,x)= \frac{L_{-(n+1),n}(F;z)+L_{-(n+1),n}(F;\frac{1}{z})}{2}$$ for
$x=\frac{z+\frac{1}{z}}{2}$, then  $ l_{n+1}(f,x)$ fulfills $ l_{n+1}(f,x_j)=f(x_j),\;  j=1,\cdots,n,$ and  $ l_{n+1}(f,\pm 1)=f(\pm 1)$. Therefore, $ l_{n+1}(f,x)$ is the Lagrange interpolation polynomial for the function $f$ and the nodal system given in (i) and  $\displaystyle  \lim_{n \rightarrow \infty}l_{n+1}(f,x)=f(x)$ uniformly on $[-1,1]$.

(ii) We consider the para-orthogonal polynomials $\omega_{2n+1}(z,-1)$ whose zeros, $1,w_1,\cdots,w_n,$  $\overline{w_1},\cdots,\overline{w_n}$, are related with the zeros of $P_n(x,\mu_3)$ by $y_j=\frac{w_j+\overline{w_j}}{2},\; j=1,\cdots,n.$\\
We pose  the problem of finding  the Laurent polynomial of Lagrange interpolation $L_{-n,n}(F;z)$ satisfying
\begin{equation*}\begin{split}
    L_{-n,n}(F;w_j)=L_{-n,n}(F;\overline{w_j})=F(w_j),\; j=1,\cdots,n,\\
     L_{-n,n}(F;1)=F(1).
\end{split}\end{equation*}
Since $\displaystyle  \lim_{n \rightarrow \infty}  L_{-n,n}(F;z)=F(z)$ uniformly on $\mathbb{T}$, if we define  $\displaystyle  l_{n}(f,x)= \frac{L_{-n,n}(F;z)+ L_{-n,n}(F;\frac{1}{z})}{2}$ for
$x=\frac{z+\frac{1}{z}}{2}$ and $ z \in \mathbb{T}$,  then  $ l_{n}(f,x)$ fulfills $ l_{n}(f,x_j)=f(x_j),\;  j=1,\cdots,n,$ and  $l_{n}(f, 1)=f(1)$. Therefore, $\displaystyle  \lim_{n \rightarrow \infty}l_{n}(f,x)=f(x)$ uniformly on $[-1,1]$.

(iii) It is obtained proceeding in the same way as in the previous items.
\end{proof}
\begin{Rem} \begin{enumerate}
\item[(i)] In particular, the  preceding result is valid for the following nodal systems: the zeros of the Tchebychef polynomials of the second kind joint with $\pm 1$, the zeros of the Tchebychef polynomials of the third  kind joint with $1$ and the zeros of the Tchebychef polynomials of the fourth kind joint with $-1$.
    \item [(ii)] Moreover it is also valid for the polynomial modifications, by positive polynomials,  of the  Bernstein measures corresponding to the Tchebychef measures mentioned before.
\end{enumerate}
\end{Rem}
\section{Trigonometric interpolation}
Next we obtain some consequences of Theorem \ref{TL:2}, which are related with the Lagrange trigonometric interpolation. Now the nodal points are in $[0,2 \pi]$ and they are obtained as follows. Let $d\mu(x)= w(x)dx$ be a positive finite Borel measure on $[-1,1]$ satisfying the Szeg\H{o} condition. Assume that the function $w(x) \sqrt{1-x^2}>0\; \forall x \in [-1,1]$ and it is analytic in an open set containing $[-1,1]$. If  $\{P_n(x)\}$ is  the MOPS($\mu$) and  $\{x_j\}_{j=1}^n$ are  the zeros of $P_n(x)$,  we consider the following nodal system on $[0,2 \pi],\, \{ \theta_j\}_{j=1}^{2n}$, such that $\theta_j= \arccos x_j,\, j=1, \cdots,n$ with $0< \theta_j < \pi$ and $\theta_{n+j}=2 \pi-\theta_{n-j+1}$ for $j=1, \cdots,n,$ that is, the points are symmetric with respect to $\pi$.
\begin{Thm} Let  $f$ be a real continuous function on $[0,2 \pi]$, with modulus of continuity $\lambda(f,\delta)=o(\delta^{\frac{1}{2}})$,  if $\delta \rightarrow 0$.

 Let the nodal system be
$\{\theta_j\}_{j=1}^{2n}$ with $\theta_j= \arccos x_j,\, j=1, \cdots,n$ with $0< \theta_j < \pi$ and $\theta_{n+j}=2 \pi-\theta_{n-j+1}$ for $j=1, \cdots,n,$ where $\{x_j\}_{j=1}^n$ are the zeros of the orthogonal polynomial $P_n(x)$ with respect to the measure $d \mu$. We also assume that $d\mu(x)= w(x)dx$ is a positive finite Borel measure on $[-1,1]$ satisfying the Szeg\H{o} condition and such that $w(x) \sqrt{1-x^2}>0\; \forall x \in [-1,1]$ and it is analytic in an open set containing $[-1,1]$.

 Then there is a Lagrange interpolation trigonometric polynomial of degree $ \leq n$, $\tau_n(\theta)$, such that $\tau_n(\theta_j)=f(\theta_j)$ for $j=1, \cdots, 2n$ and it satisfies that $\displaystyle  \lim_{n \rightarrow \infty} \tau_n(\theta)= f(\theta)$ uniformly on $[0,2 \pi]$.
\label{TL:4}\end{Thm}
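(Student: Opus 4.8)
The plan is to transfer the problem to the unit circle and read off the convergence from the results already proved there. First I would introduce the continuous function $F$ on $\mathbb{T}$ defined by $F(e^{i\theta})=f(\theta)$, viewing $f$ as a continuous $2\pi$-periodic function (so that $f(0)=f(2\pi)$ and $F$ is well defined and continuous on $\mathbb{T}$). Since chord and arc lengths on $\mathbb{T}$ are comparable, $|\theta_1-\theta_2|\le \tfrac{\pi}{2}\,|e^{i\theta_1}-e^{i\theta_2}|$ by Jordan's inequality, one gets $\lambda(F,\delta)\le \lambda(f,\tfrac{\pi}{2}\delta)=o(\delta^{\frac12})$, so $F$ meets the modulus-of-continuity hypothesis. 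Next, exactly as in the proof of Corollary \ref{Cor:3}, the Szeg\H{o} transformation turns $d\mu$ into $d\nu(\theta)=\tfrac12 w(\cos\theta)|\sin\theta|\,d\theta$, which lies in the Szeg\H{o} class with Szeg\H{o} function admitting analytic extension to $|z|>1$; the $2n$ points $e^{i\theta_j}$, $j=1,\dots,2n$, are then precisely the zeros of the para-orthogonal polynomial $\omega_{2n}(z,1)$, occurring in conjugate pairs $z_j,\overline{z_j}$ with $\cos\theta_j=\tfrac{z_j+\overline{z_j}}{2}=x_j$.

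With this identification I would apply Corollary \ref{Cor:A1} (equivalently Theorems \ref{TL:2} and \ref{LemL:1}) to $F$ with the nodal system $\{e^{i\theta_j}\}_{j=1}^{2n}$, using the splitting $p(2n)=n$, $q(2n)=n-1$, so that $p(2n)+q(2n)=2n-1$ and $\tfrac{p(2n)}{2n-1}\to \tfrac12\in(0,1)$. This produces the Laurent polynomial of Lagrange interpolation $L_{-n,n-1}(F;z)\in\Lambda_{-n,n-1}$, characterized by $L_{-n,n-1}(F;e^{i\theta_j})=F(e^{i\theta_j})=f(\theta_j)$, together with the uniform convergence $L_{-n,n-1}(F;z)\to F(z)$ on $\mathbb{T}$.

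It then remains to manufacture a real trigonometric polynomial from $L_{-n,n-1}(F;z)$. I would set
$$\tau_n(\theta)=\Re\, L_{-n,n-1}(F;e^{i\theta})=\tfrac12\big(L_{-n,n-1}(F;e^{i\theta})+\overline{L_{-n,n-1}(F;e^{i\theta})}\big).$$
Writing $L_{-n,n-1}(F;e^{i\theta})=\sum_{k=-n}^{n-1}c_k e^{ik\theta}$, the conjugate term supplies the frequencies from $-(n-1)$ to $n$, so $\tau_n$ is a real trigonometric polynomial with frequencies in $\{-n,\dots,n\}$, that is, of degree $\le n$. Because each datum $f(\theta_j)$ is real and $L_{-n,n-1}$ interpolates $F$, we get $\tau_n(\theta_j)=\Re f(\theta_j)=f(\theta_j)$ for $j=1,\dots,2n$, so $\tau_n$ is a Lagrange trigonometric interpolant of $f$. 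Finally, since $f(\theta)=\Re F(e^{i\theta})$,
$$|\tau_n(\theta)-f(\theta)|=\big|\Re\big(L_{-n,n-1}(F;e^{i\theta})-F(e^{i\theta})\big)\big|\le \big|L_{-n,n-1}(F;e^{i\theta})-F(e^{i\theta})\big|\to 0$$
uniformly, which is the desired conclusion.

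The convergence itself is thus inherited directly from the circle, and I expect no difficulty there. The main points requiring care are the bookkeeping ones: checking that the symmetrization $\Re\, L_{-n,n-1}(F;e^{i\theta})$ really is a \emph{real} trigonometric polynomial of degree at most $n$ and still interpolates (this uses crucially that $f$, and hence the interpolated data, is real, so that taking real parts does not destroy the interpolation conditions), and verifying that the passage $f\mapsto F$ preserves both continuity (the periodicity $f(0)=f(2\pi)$) and the $o(\delta^{\frac12})$ control on the modulus of continuity.
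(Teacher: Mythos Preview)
Your proposal is correct and follows essentially the same route as the paper: define $F(e^{i\theta})=f(\theta)$, invoke the Szeg\H{o} transformation so that the nodes are the zeros of $\omega_{2n}(z,1)$, apply the circle convergence result to get $L_{-n,n-1}(F;z)\to F(z)$ uniformly, and then set $\tau_n(\theta)=\Re L_{-n,n-1}(F;e^{i\theta})$. You supply more detail than the paper on the modulus-of-continuity transfer, on why $\tau_n$ has degree $\le n$ and still interpolates, and you correctly flag the implicit periodicity $f(0)=f(2\pi)$ needed for $F$ to be continuous; your citation of Corollary~\ref{Cor:A1} is also the right one (the paper's reference to Corollary~\ref{Cor:3} at that step appears to be a slip).
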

\begin{proof}  Proceeding like in Corollary \ref{Cor:3} we obtain that the transformed measure of $d \mu(x)$ by  the Szeg\H{o} transformation,  $d \nu(\theta)$, satisfies the hypothesis of Theorem  \ref{LemL:1} and the para-orthogonal polynomials satisfy the bound condition of Theorem \ref{LemL:1}.

Let us define  $F$ by $F(e^{\imath \theta})=f(\theta)$ for $\theta \in [0, 2 \pi].$ Let $L_{-n,n-1}(F,z)$ be the Lagrange interpolation polynomial such that $L_{-n,n-1}(F,z_j)=f(\theta_j)$, where $z_j= e^{\imath \theta_j},\; j=1, \cdots,2n.$\\
Since $F$ is continuous on $\mathbb{T}$ and $\lambda(F,\delta)=o(\delta^{\frac{1}{2}})$, we can apply Corollary \ref{Cor:3}  and therefore\\
 $\displaystyle  \lim_{n \rightarrow \infty}  L_{-n,n-1}(F;z)=F(z)$ uniformly on $\mathbb{T}$.\\
If we take $\tau_n(\theta)=\Re (L_{-n,n-1}(F;e^{\imath \theta}))$ then $\tau_n(\theta)$ satisfies the interpolation conditions, $\tau_n(\theta_j)=f(\theta_j)$,  and $\displaystyle  \lim_{n \rightarrow \infty} \tau_n(\theta)=f(\theta)$.
\end{proof}
In the next result  we denote the integer part of $x$ by $[x]$ and we consider another type of nodal system on $[0,2 \pi].$

\begin{Thm} Let $\nu$ be a measure on  $[0,2 \pi]$ in the Szeg\H{o} class with Szeg\H{o} function having analytic extension up to $\vert z \vert >1$. Let $\{ z_j\}_{j=1}^n$ be the zeros of the para-orthogonal polynomials $\omega_n(z)=\phi_n(z)+ \tau \phi_n^*(z)$, with $\vert \tau \vert =1$, and  let $\theta_j \in [0, 2 \pi]$ such that $e^{\imath \theta_j}=z_j,\; j=1,\cdots, n$.

If $f$ is a continuous function on  $[0, 2 \pi]$ with
  $\lambda(f,\delta)=o(\delta^{\frac{1}{2}})$,  if $\delta \rightarrow 0$,
then    there is a  Lagrange interpolation trigonometric polynomial of  degree $ \leq [\frac{n}{2}]$, $\tau_{[\frac{n}{2}]}(\theta)$, such that $\tau_{[\frac{n}{2}]}(\theta_j)=f(\theta_j)$ for $j=1, \cdots, n$ and it satisfies that $\displaystyle  \lim_{n \rightarrow \infty} \tau_{[\frac{n}{2}]}(\theta)= f(\theta)$ uniformly on $[0,2 \pi]$.
\label{TL:5}\end{Thm}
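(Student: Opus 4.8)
The plan is to reduce the trigonometric interpolation problem to the Laurent interpolation problem on $\mathbb{T}$ already treated in Corollary \ref{Cor:A1}, in the same spirit as the proof of Theorem \ref{TL:4}, but now retaining the nodes $z_j$ themselves rather than their real parts, so that no symmetrization is needed. First I would define $F$ on $\mathbb{T}$ by $F(e^{\imath \theta})=f(\theta)$ for $\theta \in [0,2\pi]$; since $f$ is continuous and $2\pi$-periodic, $F$ is continuous on $\mathbb{T}$. Because the chord $\vert e^{\imath \theta_1}-e^{\imath \theta_2}\vert$ and the arc $\vert \theta_1-\theta_2\vert$ are comparable for nearby points, Jordan's inequality giving $\vert \theta_1-\theta_2\vert \leq \frac{\pi}{2}\vert e^{\imath \theta_1}-e^{\imath \theta_2}\vert$ whenever $\vert \theta_1-\theta_2\vert \leq \pi$, the hypothesis $\lambda(f,\delta)=o(\delta^{\frac{1}{2}})$ transfers to $\lambda(F,\delta)=o(\delta^{\frac{1}{2}})$ on $\mathbb{T}$.

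Next, since $\nu$ lies in the Szeg\H{o} class with Szeg\H{o} function having analytic extension up to $\vert z\vert >1$, Theorem \ref{LemL:1} guarantees that the zeros $\{z_j\}_{j=1}^n$ of $\omega_n(z,\tau)$ satisfy the conditions (i)--(iii) stated there, which are exactly the hypotheses of Theorem \ref{TL:2} and hence of Corollary \ref{Cor:A1}. I would then fix the balanced splitting $p(n)=[\frac{n}{2}]$ and $q(n)=[\frac{n-1}{2}]$, so that $p(n)+q(n)=n-1$, both sequences are nondecreasing, $\max(p(n),q(n))=[\frac{n}{2}]$, and $\lim_{n\to\infty}\frac{p(n)}{n-1}=\frac{1}{2} \in (0,1)$. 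With this choice Corollary \ref{Cor:A1} applies to $F$ and the nodal system $\{z_j\}_{j=1}^n$, producing the Laurent polynomial $L_{-p(n),q(n)}(F;z) \in \Lambda_{-p(n),q(n)}$ characterized by $L_{-p(n),q(n)}(F;z_j)=F(z_j)=f(\theta_j)$, together with the uniform convergence $L_{-p(n),q(n)}(F;z) \to F(z)$ on $\mathbb{T}$.

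Finally I would set $\tau_{[\frac{n}{2}]}(\theta)=L_{-p(n),q(n)}(F;e^{\imath \theta})$. Writing $z^k=e^{\imath k\theta}$, a generic element of $\Lambda_{-p(n),q(n)}$ becomes $\sum_{k=-p(n)}^{q(n)} c_k e^{\imath k\theta}$, a trigonometric polynomial of degree $\max(p(n),q(n))=[\frac{n}{2}]$; thus $\tau_{[\frac{n}{2}]}$ is a trigonometric polynomial of degree $\leq [\frac{n}{2}]$. The interpolation identities $\tau_{[\frac{n}{2}]}(\theta_j)=f(\theta_j)$ follow at once from those of $L_{-p(n),q(n)}(F;\cdot)$ at $z_j=e^{\imath \theta_j}$, and the uniform convergence $\tau_{[\frac{n}{2}]}(\theta) \to f(\theta)$ on $[0,2\pi]$ is inherited from the uniform convergence of $L_{-p(n),q(n)}(F;z)$ to $F(z)$ on $\mathbb{T}$.

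Once Corollary \ref{Cor:A1} is available, the argument is essentially a dictionary between Laurent and trigonometric polynomials, so the only genuine care-points are (a) choosing an admissible balanced pair $(p(n),q(n))$ that realizes the degree bound $[\frac{n}{2}]$ for both parities of $n$ while keeping the limit ratio $r$ strictly inside $(0,1)$, and (b) checking that the modulus-of-continuity condition survives the passage from the arc-length metric on $[0,2\pi]$ to the chord metric on $\mathbb{T}$. I expect the parity bookkeeping in (a) to be the most delicate step, since an off-by-one there would spoil the stated degree $\leq [\frac{n}{2}]$.
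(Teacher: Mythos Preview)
Your argument is correct and follows essentially the same route as the paper: define $F(e^{\imath\theta})=f(\theta)$, invoke Corollary~\ref{Cor:A1} with the balanced choice $p(n)=[\frac{n}{2}]$, $q(n)=[\frac{n-1}{2}]$ (which coincides case by case with the paper's split $p=\frac{n}{2},\,q=\frac{n}{2}-1$ for $n$ even and $p=q=\frac{n-1}{2}$ for $n$ odd), and read off a trigonometric polynomial of degree $\leq[\frac{n}{2}]$. The only cosmetic difference is that the paper sets $\tau_{[\frac{n}{2}]}(\theta)=\Re\bigl(L_{-p(n),q(n)}(F;e^{\imath\theta})\bigr)$, tacitly treating $f$ as real so that the interpolation conditions survive taking real parts, whereas you keep the full Laurent polynomial as a complex trigonometric polynomial; both choices yield the stated degree bound and uniform convergence.
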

\begin{proof} Let $F$ be a continuous function defined by $F(e^{\imath \theta})=f(\theta)$. Since $\lambda(F,\delta) \leq \lambda(f,\delta)$, then $\lambda(F,\delta)=o(\delta^{\frac{1}{2}})$.  By applying Corollary  \ref{Cor:A1} we obtain for $p(n)+q(n)=n-1$, with $\displaystyle \lim_{n \rightarrow \infty} \frac{p(n)}{n-1}=r$ and $ 0 < r < 1$, that there exists $L_{-p(n),q(n)}(F;z)$ such that $L_{-p(n),q(n)}(F;e^{\imath \theta_j})=f(\theta_j)$ and  $\displaystyle  \lim_{n \rightarrow \infty}  L_{-p(n),q(n)}(F;z)=F(z)$ uniformly on $\mathbb{T}$.\\
We distinguish the two following cases:\\
(i) If $n$ is even we take  $p(n)=\frac{n}{2}$ and $q(n)=\frac{n}{2}-1$. Then $[\frac{n}{2}]=\frac{n}{2}$.\\
(ii) If $n$ is odd we take  $p(n)=\frac{n-1}{2}$ and $q(n)=\frac{n-1}{2}$. Then $[\frac{n}{2}]=\frac{n-1}{2}$.\\
In any case the real part of  $ L_{-p(n),q(n)}(F;z)$ is a trigonometric polynomial of degree $[\frac{n}{2}]$, that satisfies the interpolation conditions and the convergence property.
\end{proof}

\end{document}